\definecolor{lightblue}{rgb}{.90,.95,1}
\DeclareMathOperator*{\argmin}{arg\,min}
\newtheorem{remark}{Remark}
\newtheorem{assumption}{\bf Assumption}
\title{A new perspective on the learning dynamics for a class of learning problems via averaged gradient systems coupled with diffusion-transmutation processes}
\author{Getachew K. Befekadu}
\begin{document}
\maketitle

\renewcommand{\thefootnote}{\arabic{footnote}}

\begin{abstract}
In the first part of this paper, we consider a family of continuous-time dynamical systems coupled with diffusion-transmutation processes. Under certain conditions, such randomly perturbed dynamical systems can be interpreted as an averaged dynamical system, whose weighting coefficients, that depend on the state trajectory of the underlying averaged system, are assumed to be strictly positive with sum unity. Here, we provide a large deviation result for the corresponding family of processes, i.e., a variational problem formulation modeling the most likely sample path leading to certain noise-induced rare-events. This remarkably allows us to provide a computational algorithm for solving the corresponding variational problem. In the second part of the paper, we use some of the insights from the first part and provide a new perspective on the learning dynamics for a class of learning problems, whose averaged gradient dynamical systems, from continuous-time perspective, are guided by a set of subsampled datasets that are obtained from the original dataset via bootstrapping or other related resampling-based techniques. Finally, we present some numerical results for a typical nonlinear regression problem, where the corresponding averaged gradient system is interpreted as random walks on a graph, whose outgoing edges are uniformly chosen at random.
\end{abstract}
\begin{keywords} 
Gradient dynamical systems, Hamiltonian principle, large deviation principles, learning problem, modeling of nonlinear functions, optimal sample path, point estimations, random perturbations, rare events.
\end{keywords}

\section{Introduction} \label{S1}
The main objective of this paper is to provide a new perspective on the learning dynamics for a class of learning problems of point estimations for modeling of high-dimensional nonlinear functions or dynamical systems. In particular, the framework can be viewed as a general extension of improving or enhancing the learning dynamics, by apportioning dynamically the averaging weighting coefficients, for a family of empirical risk minimization-based learning problems, where the corresponding gradient systems, from continuous-time perspective, are guided by a set of subsampled datasets obtained from the original dataset. To be more specific, there are two conceptually related topics discussed in some details. (i) In the first part, we specifically consider a family of continuous-time dynamical systems coupled with diffusion-transmutation processes. Under certain conditions, we can interpret such randomly perturbed dynamical systems as an averaged dynamical system, whose weighting coefficients, that depend on the state trajectory of the underlying averaged system, are strictly positive with sum unity. Here, we also provide a large deviation result for the corresponding family of processes, i.e., a variational problem formulation modeling the most likely sample path leading to certain noise-induced rare-events. This further allows us to provide a computational algorithm, i.e., a rare-even algorithm, for solving numerically the corresponding variational problem. (ii) In the second part, we use some of the insights from the first part and we further provide a new perspective to understand the learning dynamics for a class of learning problems, whose averaged gradient systems, from continuous-time perspective, are guided by a set of subsampled datasets obtained from the original dataset via bootstrapping or other related resampling-based techniques, as an instance of {\it divide-and-conquer} paradigm dealing with large datasets. 

This paper is organized as follows. In Section~\ref{S2}, we first consider a family of continuous-time dynamical systems coupled with diffusion-transmutation processes. In this section, we also discuss conditions under which such randomly perturbed dynamical systems can be interpreted as an averaged dynamical system, whose weighting coefficients are strictly positive, with sum unity, that depend on the state trajectory of the underlying averaged system, where such a connection further allows us to provide a large deviation result for the corresponding family of processes. In Section~\ref{S3}, based on the insights from Section~\ref{S2}, we present our main results, where we specifically consider an empirical risk minimization-based learning problem, whose gradient dynamical systems is guided by a set of subsampled datasets obtained from the original dataset. This section also provides a rare-event algorithm for solving the corresponding variational problem. In Section~\ref{S4}, we present some numerical results for a typical case of nonlinear regression problem, and  Section~\ref{S5} contains some concluding remarks.

\section{Large deviations, action functionals and optimal sample path}\label{S2} In this section, we present some background and preliminary results that will later be useful for our main results. Here, the mathematical argument is sufficient to provide a new perspective on the learning dynamics for a class of learning problems, whose averaged gradient dynamical systems are guided by a set of subsampled datasets obtained from the original dataset.

Consider the following Markov process $\left(X_t^{\epsilon}, \nu_k^{\epsilon}, \mathbb{P}_{x,k}\right)$ such that
\begin{align}
d X_t^{\epsilon} &= f_{ \nu_k^{\epsilon}}(X_t^{\epsilon}) dt + \sqrt{\epsilon} I_p d W_t, \quad X_0^{\epsilon} = x_0, \quad \label{Eq2.1}\\ ~\notag\\
 \mathbb{P} \left \{\nu_{t + \Delta}^{\epsilon} = l \biggl \vert X_t^{\epsilon} = x, \, \nu_{t}^{\epsilon} = k \right\} & = \frac{p_{kl}(x)}{\epsilon} \Delta + o({\Delta}) \quad \text{as} \quad\Delta \downarrow 0, \label{Eq2.2}\\
& \quad k, l \in \left\{1,\,2,\ldots, K\right\} \quad \text{and} \quad x \in \mathbb{R}^p, \notag
\end{align}
where $\epsilon \ll 1$ is a small positive parameter, $I_p$ is a $p \times p$ identity matrix, $W_t$ is a $p$-dimensional standard Wiener process, and the intensities for the second component of the Markov process are assumed to be strictly positive, i.e., $p_{kl}(x) > 0$, for $k,\l \in \{1,\,2,\ldots, K\}$, when $x \in \mathbb{R}^p$ is fixed. Moreover, the intensity for the additive noise term in Equation~\eqref{Eq2.1} depends on the small parameter $\epsilon \ll 1$, while the frequencies for the fast random switching $\nu_{\cdot}^{\epsilon}$ in Equation~\eqref{Eq2.2} are determined by $\epsilon^{-1}$. 

Throughout the paper, we will assume the following condition which is a counterpart assumption made on asymptotic problems concerning random differential equations with small noise intensities (e.g., see \cite{r1}, \cite{r2}, \cite{r3} or \cite{r4} for additional discussions).
\begin{assumption}\label{ASM1}
Let $D \subset \mathbb{R}^p$ be a given bounded open domain, with smooth boundary $\partial D$. Moreover, assume that there exists a compact set $\,\Gamma \subset D$ such that any two points of $\,\Gamma$ are equivalent and that the {\it $\omega$-limit} sets of trajectories for the family of dynamical systems
\begin{align}
 \dot{x}_k(t) = f_k(x_k), \quad  k = 1, 2, \ldots, K, \label{Eq2.3}
\end{align}
that start from $D \cup \partial D$ always belong to $\Gamma$.
\end{assumption}

Consider next the following continuous-time averaged dynamical system
\begin{align}
 \dot{x}(t) &= \sum\nolimits_{k=1}^K \pi_k(x) f_k(x), \quad x(0) = x_0,  \quad  \left(\text{with} ~~ \dot{x}(t) = \tfrac{d x(t)}{dt} \right), \label{Eq2.4}\\
                &\triangleq \bar{f}(x), \notag
\end{align}
where $\pi_k(x) > 0$ for all $k \in \{1,\,2,\ldots, K\}$, with $\sum\nolimits_{k=1}^K \pi_k(x) = 1$, and $x \in \mathbb{R}^p$. 

Then, we state the following two propositions that provide conditions under which the above averaged dynamical system in Equation~\eqref{Eq2.4}, in the low-noise limits as $\epsilon \to 0$, can be interpreted as a family of randomly perturbed dynamical systems coupled with diffusion-transmutation processes of Equations~\eqref{Eq2.1} and \eqref{Eq2.2}.

\begin{proposition} \label{P1}
Suppose that the $K \times K$ stochastic matrix $P(x) = \left(p_{kl}(x) \right)_{k,l}$, for $k, l \in \left\{1,\,2,\ldots, K\right\}$, in Equation~\eqref{Eq2.2} is irreducible and aperiodic, whenever $x \in \mathbb{R}^p$ is fixed. Then, there exits a unique strictly positive stationary (or invariant) distribution $\pi(x)$, i.e., a unique left eigenvector of $P(x)$, such that
\begin{align}
 \pi(x) = \pi(x)P(x), \label{Eq2.5}
\end{align}
where $\pi(x)= \left(\pi_1(x),\pi_2(x), \dots, \pi_K(x) \right)$, with $\pi_k(x) > 0$, for $k =1,\,2,\ldots, K$, and ~ $\sum\nolimits_{k=1}^K \pi_k(x) = 1$.
\end{proposition}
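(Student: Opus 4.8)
The plan is to apply the Perron--Frobenius theorem to the nonnegative matrix $P(x)$ for each fixed $x \in \mathbb{R}^p$, so the entire argument is pointwise in $x$ and the spatial dependence plays no role beyond the standing hypothesis that $p_{kl}(x) > 0$. First I would record that $P(x)$ is row-stochastic, i.e. $\sum_{l=1}^K p_{kl}(x) = 1$ for every $k$, so that $P(x)\mathbf{1} = \mathbf{1}$ with $\mathbf{1} = (1,\dots,1)^{\top}$. Hence $1$ is an eigenvalue of $P(x)$, and since every eigenvalue $\lambda$ of a row-stochastic matrix satisfies $|\lambda| \le \max_k \sum_l p_{kl}(x) = 1$ (for instance by Gershgorin's theorem, or because $\|P(x)\|_{\infty} = 1$), the spectral radius of $P(x)$ equals $1$.

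Next, because $P(x)$ is irreducible and aperiodic — indeed here strictly positive — the strong form of the Perron--Frobenius theorem applies: the spectral radius $1$ is an algebraically simple eigenvalue, it is the only eigenvalue on the unit circle, and the associated left eigenvector can be chosen with strictly positive entries. Denote such an eigenvector by $\tilde{\pi}(x) = (\tilde{\pi}_1(x),\dots,\tilde{\pi}_K(x))$ with $\tilde{\pi}_k(x) > 0$ and $\tilde{\pi}(x)P(x) = \tilde{\pi}(x)$. Setting $\pi(x) = \tilde{\pi}(x)\big/\sum_{k=1}^K \tilde{\pi}_k(x)$ produces a vector with $\pi_k(x) > 0$, $\sum_{k=1}^K \pi_k(x) = 1$, and $\pi(x)P(x) = \pi(x)$, which is precisely the claimed stationary distribution. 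Uniqueness follows from algebraic simplicity of the eigenvalue $1$: the corresponding left eigenspace is one-dimensional, so the normalization $\sum_k \pi_k(x) = 1$ singles out a unique element of it.

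I do not expect a genuine obstacle here, since the content is entirely classical; the only points requiring care are (i) checking that the hypotheses ``irreducible and aperiodic'' are exactly those under which Perron--Frobenius delivers a \emph{unique} strictly positive left eigenvector — in fact irreducibility alone already yields uniqueness of the stationary distribution, while aperiodicity additionally gives $P(x)^n \to \mathbf{1}\,\pi(x)$ as $n \to \infty$, an ergodic property that will be needed when passing to the averaged system of Equation~\eqref{Eq2.4} — and (ii) emphasizing that the construction is carried out separately for each $x$, so that no continuity or measurability of $x \mapsto \pi(x)$ is asserted or required at this stage. As an alternative to the matrix-theoretic route, one could argue probabilistically: $P(x)$ is the one-step transition matrix of a finite-state Markov chain on $\{1,\dots,K\}$ whose irreducibility forces it to be a single positive-recurrent communicating class, and the standard existence-and-uniqueness theorem for stationary distributions of finite irreducible chains yields $\pi(x)$ directly, with strict positivity from $\pi_k(x) = 1/\mathbb{E}_k[\tau_k] > 0$, where $\tau_k$ is the first return time to state $k$.
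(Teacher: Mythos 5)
Your proposal is correct and takes essentially the same route as the paper, which simply cites the ``fundamental theorem of Markov processes'' (i.e., the Perron--Frobenius/ergodic theorem for finite irreducible aperiodic chains) without writing out the details. Your pointwise-in-$x$ Perron--Frobenius argument, with the normalization and the simplicity of the eigenvalue $1$ giving uniqueness, is precisely the content of that cited theorem.
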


\begin{proof}
The proof is a direct application of the fundamental theorem of Markov process (e.g., see \cite{r5} or \cite{r6}).
\end{proof}

\begin{proposition} \label{P2}
Suppose that Proposition~\ref{P1} holds true. Then, the first component of the Markov process $\left(X_t^{\epsilon}, \nu_k^{\epsilon}, \mathbb{P}_{x,k}\right)$ corresponding to the system in Equations~\eqref{Eq2.1} and \eqref{Eq2.2} approaches the state trajectory of the averaged dynamical system in Equation~\eqref{Eq2.4}, in the low-noise limits as $\epsilon \to 0$, with the same initial condition $X_0^{\epsilon} = x(0) = x_0$. 
\end{proposition}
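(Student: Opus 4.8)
The plan is to read this as an averaging-type limit theorem for the Markov-modulated diffusion $(X_t^{\epsilon},\nu_t^{\epsilon})$, in which the switching component is fast (rates of order $\epsilon^{-1}$) while the Brownian perturbation is small (intensity $\sqrt{\epsilon}$), both tuned by the single parameter $\epsilon$. I would work with the infinitesimal generator of the joint process acting on test functions $g(x,k)$,
\begin{align*}
\mathcal{L}^{\epsilon} g(x,k) = f_k(x)\cdot\nabla_x g(x,k) + \tfrac{\epsilon}{2}\,\Delta_x g(x,k) + \tfrac{1}{\epsilon}\bigl(Q(x)g(x,\cdot)\bigr)(k),
\end{align*}
where $Q(x)=P(x)-I$ is the frozen-$x$ generator of the switching chain, for which $\pi(x)Q(x)=0$ by Proposition~\ref{P1}. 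The objective is to show that every weak limit point of $\{X^{\epsilon}\}_{\epsilon>0}$ in $C([0,T];\mathbb{R}^p)$ solves the martingale problem associated with the reduced generator $\bar{\mathcal{L}}g(x)=\bar f(x)\cdot\nabla g(x)$, i.e.\ coincides with the deterministic trajectory of Equation~\eqref{Eq2.4}.

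First I would establish tightness of $\{X^{\epsilon}\}_{\epsilon>0}$ on $C([0,T];\mathbb{R}^p)$. Using Assumption~\ref{ASM1}, which confines the relevant dynamics to a bounded region so that the $f_k$ may be taken bounded and Lipschitz there, together with the trivial estimate for the $\sqrt{\epsilon}\,W_t$ contribution, an Aldous--Kurtz / moment argument yields a uniform modulus-of-continuity bound and hence relative compactness. Second --- the crux of the argument --- I would handle the fast switching by the perturbed test function (corrector) method: since $\sum_{k}\pi_k(x)\bigl(\bar f(x)-f_k(x)\bigr)=0$, the Fredholm alternative for the irreducible generator $Q(x)$ provides a bounded solution $\chi(x,k)$ of $\bigl(Q(x)\chi(x,\cdot)\bigr)(k)=\bar f(x)-f_k(x)$; for $g\in C^2$ one sets $g^{\epsilon}(x,k)=g(x)+\epsilon\,\chi(x,k)\cdot\nabla g(x)$. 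A direct computation then gives $\mathcal{L}^{\epsilon}g^{\epsilon}(x,k)=\bar{\mathcal{L}}g(x)+O(\epsilon)$ uniformly in $(x,k)$ on the relevant compact set, and since $g^{\epsilon}-g=O(\epsilon)$, passing to the limit in the Dynkin martingale $g^{\epsilon}(X_t^{\epsilon},\nu_t^{\epsilon})-g^{\epsilon}(x_0,\nu_0^{\epsilon})-\int_0^t\mathcal{L}^{\epsilon}g^{\epsilon}(X_s^{\epsilon},\nu_s^{\epsilon})\,ds$ shows that every limit point solves the $\bar{\mathcal{L}}$-martingale problem.

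Finally I would invoke well-posedness: smoothness of $P(\cdot)$ together with irreducibility makes $\pi(\cdot)$, hence $\bar f=\sum_{k}\pi_k f_k$, Lipschitz on the region of interest, so Equation~\eqref{Eq2.4} has a unique solution $x(\cdot)$ with $x(0)=x_0$; the $\bar{\mathcal{L}}$-martingale problem (a first-order, zero-quadratic-variation problem) then has that $x(\cdot)$ as its unique solution, which forces the whole family $X^{\epsilon}$ to converge to $x(\cdot)$ in probability, uniformly on $[0,T]$, with the common initial value $X_0^{\epsilon}=x(0)=x_0$. The main obstacle is the second step: one must check that the singular $\epsilon^{-1}$ term is exactly absorbed by the corrector and that $\chi$ and its $x$-derivatives are genuinely bounded and Lipschitz --- this is precisely where the irreducibility--aperiodicity hypothesis of Proposition~\ref{P1} and the regularity of the $p_{kl}(\cdot)$ enter. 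An alternative, more hands-on route bypasses the corrector: partition $[0,T]$ into windows of length $\delta$ with $\epsilon\ll\delta$, freeze $X^{\epsilon}$ on each window, and use the ergodic theorem for the fast chain to replace the time-average of $f_{\nu_s^{\epsilon}}(X^{\epsilon}_s)$ by $\bar f(X^{\epsilon})$, then let $\epsilon\to0$ followed by $\delta\to0$; controlling the two resulting error terms is the analogue of the same obstacle.
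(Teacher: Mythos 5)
Your proposal is correct and is in fact considerably more substantive than the paper's own proof, which consists of two sentences: it asserts that the fast component $\nu_t^{\epsilon}$ equilibrates to the frozen-$x$ invariant distribution $\pi(x)$, concludes that $X_t^{\epsilon}$ may therefore be viewed as a small random perturbation of the averaged system \eqref{Eq2.4}, and explicitly ``omits the details,'' deferring to \cite{r7} and \cite{r8}. Your argument supplies precisely the missing mechanism for that heuristic: the perturbed test function $g^{\epsilon}(x,k)=g(x)+\epsilon\,\chi(x,k)\cdot\nabla g(x)$, with $\chi$ solving the Poisson equation $Q(x)\chi(x,\cdot)=\bar f(x)-f_k(x)$ (solvable by the Fredholm alternative exactly because the right-hand side is $\pi(x)$-centered), absorbs the singular $\epsilon^{-1}$ term and identifies every limit point with the unique solution of the first-order martingale problem for $\bar{\mathcal L}$; your alternative time-window/ergodic-theorem route is the classical Khasminskii averaging argument and is equally legitimate. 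Two points deserve the care you already flag: (i) the regularity of $\chi(\cdot,k)$ in $x$ requires smoothness of the intensities $p_{kl}(\cdot)$, which the paper nowhere states but clearly intends; and (ii) Assumption~\ref{ASM1} only controls deterministic trajectories started in $D\cup\partial D$, so the tightness step should be phrased with a stopping-time localization at $\partial D$ rather than as a global bound on the $f_k$. Neither is a gap in your reasoning so much as a hypothesis the paper leaves implicit. Note also that your identification $Q(x)=P(x)-I$ quietly reconciles the paper's own ambiguity between the transition intensities of \eqref{Eq2.2} and the stochastic matrix of Proposition~\ref{P1}; since both have the same left null vector $\pi(x)$, the averaged limit is unaffected.
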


\begin{proof}
The proof follows from the fact that the second component of the Markov process $\left(X_t^{\epsilon}, \nu_k^{\epsilon}, \mathbb{P}_{x,k}\right)$ in Equations~\eqref{Eq2.1} and \eqref{Eq2.2} approaches to a unique stationary distribution $\pi(x)$ of Markov chain with intensities $p_{kl}(x) > 0$, for $k, l \in \left\{1,\,2,\ldots, K\right\}$, whenever $x \in \mathbb{R}^p$ is fixed. As a result of this, the first component $X_t^{\epsilon}$ can be viewed as a random perturbation of the averaged dynamical system of Equation~\eqref{Eq2.4}, with small noise term, i.e.,
\begin{align*}
 d X_t^{\epsilon} = \sum\nolimits_{k=1}^K \pi_k(X_t^{\epsilon}) f_k(X_t^{\epsilon}) dt + \sqrt{\epsilon} I_p d W_t, \quad X_0^{\epsilon} = x_0,
 \end{align*}
 which provides a rationale for the proposition and we omit the details (see \cite{r7} and \cite{r8} for similar discussions). 
 \end{proof}

Here, it is worth remarking that, for small $\epsilon \ll 1$, the second component of the Markov process in Equations~\eqref{Eq2.1} and \eqref{Eq2.2} makes many transitions approximately with frequencies $\epsilon^{-1} \gg 1$, while the first component makes a small displacement from the starting point $x \in \mathbb{R}^p$. On the other hand, the stochastic matrix $P(x) = \left(p_{kl}(x) \right)_{k,l}$, for $k, l \in \left\{1,\,2,\ldots, K\right\}$, is assumed to be irreducible and aperiodic, for a fixed $x \in \mathbb{R}^p$. As a result of this, the corresponding distribution for the second component will approach to a unique stationary (or invariant) distribution $\pi(x)$, with intensities $p_{kl}(x) > 0$, for $k, l \in \left\{1,\,2,\ldots, K\right\}$, whenever $x \in \mathbb{R}^p$ is fixed. 

Then, noting Assumption~\ref{ASM1}, our main interest is to provide a large deviation result, i.e., characterizing the logarithmic order for the probabilities of the sample path deviations of $X_t^{\epsilon}$ from $x(t)$, i.e, the state trajectory of the averaged dynamical system in Equation~\eqref{Eq2.4}, in the interval $[0, T]$, due to the small diffusion term as well as the deviations from the unique stationary distribution, in the low-noise limits as $\epsilon \to 0$. Hence, in order to characterize such deviations, we introduce the following {\it action functional} $S_{0T}[\phi]$
\begin{equation}
S_{0T}[\phi] = \left\{ \begin{matrix} 
\int_0^T L(\phi,\dot{\phi}) dt, \quad \text {if the integral converges} \\~\\
+\infty, \quad\quad\quad\quad\quad\quad\quad \text {otherwise} 
\end{matrix}\right.  \label{Eq2.6}
\end{equation}  
where $\phi(t) \in \mathcal{C} \bigl([0,\,T], \mathbb{R}^p \bigl)$ and $L(\phi,\dot{\phi})$ is the {\it Lagrangian function} which is given by
\begin{align}
L(\phi,\dot{\phi}) = \frac{1}{2}\left \Vert \dot{\phi}(t) - \sum\nolimits_{k=1}^K \pi_k(\phi(t)) f_k(\phi(t)) \right \Vert^2 \label{Eq2.7}
\end{align}
Then, the probability of observing any sample paths close to a given continuous function $\phi(t) \in \mathcal{C} \bigl([0,\,T], \mathbb{R}^p \bigl)$ can be estimated as follows
\begin{align}
\mathbb{P} \biggl \{  \sup\limits_{ t \in [0,\, T]} \bigl \vert  X_t^{\epsilon} - \phi(t) \bigr \vert < \delta \biggr\}  \asymp \exp\biggl(-\frac{1}{\epsilon} S_{0T}[\phi]\biggr),  \label{Eq2.8}
\end{align}
for sufficiently small $\delta$, where the notation $\asymp$ denotes {\it log-asymptotic equivalent}, i.e, the ratio of the logarithms of both sides converges to one.

In particular, the probability of observing the random event $X_T^{\epsilon} \in \Gamma$ (cf. Assumption~\ref{ASM1}) consists of contributions of sample paths close to all possible absolutely continuous function $\phi(t) \in \mathcal{C}_{\Gamma}$, i.e.,
\begin{align}
\phi(t) \in \mathcal{C}_{\Gamma} = \biggl\{ \phi(t) \in \mathcal{C} \bigl([0,\,T],\, \mathbb{R}^p \bigl) \, \bigg\vert \, \phi(0) = x_0 \in D \cup \partial D ~ \text{and} ~ \phi(T) \in \Gamma \biggr\}.  \label{Eq2.9}
\end{align}
Moreover, in the low-noise limits as $\epsilon \to 0$, the only contribution significantly comes from the trajectory $\phi^{\ast}(t)$ with the {\it smallest action} functional $S_{0T}[\phi^{\ast}]$, i.e.,  
\begin{align}
\phi^{\ast}(t)  = \argmin\limits_{ \phi(t) \in \mathcal{C}} S_{0T}[\phi],  \label{Eq2.10}
\end{align}
which is the most likely (or optimal) sample path. That is, it constitutes {\it almost surely} all sample paths conditioned on the event which is arbitrarily close to $\phi^{\ast}(t)$, while the functional $S_{0T}$ effectively characterizes the difficulty of the passage of $X_t^{\epsilon}$ near $\phi^{\ast}(t)$ in the interval $[0, T]$. More precisely, for sufficiently small $\delta >0$, we have the following result
\begin{align}
\lim\limits_{\epsilon \to 0} \mathbb{P} \biggl \{ \sup\limits_{ t \in [0,\, T]} \bigl \vert  X_t^{\epsilon} - \phi^{\ast}(t) \bigr \vert < \delta \,\biggl\vert \,X_T^{\epsilon} \in \Gamma \biggr\} = 1.  \label{Eq2.11}
\end{align}

Moreover, from calculus of variations (e.g., see \cite{r10}), the optimal sample path $\phi^{\ast}(t)$ can be obtained as a solution to the above variational problem satisfying a necessary condition of extremum $\delta S_{0T}[\phi] = 0$, i.e.,
\begin{align}
\delta S_{0T}[\phi] &= S_{0T}[\phi + \delta\phi] - S_{0T}[\phi], \notag \\
                              &= \int_0^T L(\phi + \delta\phi,\dot{\phi}+\delta\dot{\phi})dt - \int_0^T L(\phi,\dot{\phi})dt, \notag \\
                              &= \int_0^T \left(\frac{\partial L(\phi,\dot{\phi})}{\partial \phi} - \frac{d}{dt} \frac{\partial L(\phi,\dot{\phi})}{\partial \dot{\phi}} \right) \delta \phi dt, \notag\\
                              &= 0. \label{Eq2.12}
\end{align}
Since the variation $\delta \phi$ is arbitrary, then we are left with the Euler-Lagrange equation
\begin{align}
\frac{\partial L(\phi,\dot{\phi})}{\partial \phi} - \frac{d}{dt} \frac{\partial L(\phi,\dot{\phi})}{\partial \dot{\phi}} = 0,  \label{Eq2.13}
\end{align}
which can be solved for the optimal sample path $\phi^{\ast}(t)$ from the following second-order differential equation, with appropriate boundary conditions,
\begin{align}
\ddot{\phi}(t) - \left[ \sum\nolimits_{k=1}^K \pi_k(\phi) f_k(\phi)\right] \left[ \sum\nolimits_{k=1}^K \pi_k(\phi) f_k(\phi)\right]^T  = 0. \label{Eq2.14}
\end{align}
\begin{remark}
Here, it is worth remarking that the optimal sample path $\phi^{\ast}(t)$ can be found by transforming the original stochastic problem into a deterministic problem described by the Euler-Lagrange equation in Equation~\eqref{Eq2.13}.
\end{remark}

On the other hand, the minimization problem, i.e, the variational problem, in Equation~\eqref{Eq2.10} can be solved by seeking solutions to the Euler-Lagrange equation via the Hamiltonian principle from classical mechanics. We first define the conjugate moment as follows
\begin{align}
\psi = \frac{\partial L(\phi,\dot{\phi})}{\partial \dot{\phi}}. \label{Eq2.15}
\end{align}
Then, we can write the Hamiltonian as the Fenchel-Legendre transform of the Lagrangian function $L(\phi, \dot{\phi})$ in $\dot{\phi}$ as follows
\begin{align}
H(\phi,\psi) = \sup\limits_{y} \bigl(\langle \psi,\, y\rangle - L(\phi,y) \bigr),  \label{Eq2.16}
\end{align}
such that the Lagrangian, assuming convexity of $L(\phi, \dot{\phi})$ in $\dot{\phi}$ and noting the duality relation, can be further expressed as
\begin{align}
L(\phi,\dot{\phi}) = \sup\limits_{\psi} \bigl(\langle \psi,\, \dot{\phi} \rangle - H(\phi,\psi) \bigr).  \label{Eq2.17}
\end{align}
Consequently, the minimization problem in Equation~\eqref{Eq2.10} is equivalent to solving the following Hamiltonian equations of motion, with appropriate boundary conditions,
\begin{align}
\dot{\phi} = \nabla_{\psi} H(\phi,\psi) \quad \text{and} \quad \dot{\psi} = -\nabla_{\phi} H(\phi,\psi),  \label{Eq2.18}
\end{align}
where 
\begin{align}
H(\phi,\psi) = \left \langle \sum\nolimits_{k=1}^K \pi_k(\phi) f_k(\phi), \, \psi \right \rangle + \tfrac{1}{2}\left \langle \psi, \psi \right \rangle,  \label{Eq2.19}
\end{align}
that further gives us the following Hamiltonian equations
\begin{align}
\dot{\phi}(t) = \sum\nolimits_{k=1}^K \pi_k(\phi) f_k(\phi) + \psi(t)  \label{Eq2.20}
\end{align}
and
\begin{align}
 \dot{\psi}(t) = -\sum\nolimits_{k=1}^K  \left( f_k(\phi) \left(\nabla \pi_k(\phi)\right)^T +  \pi_k(\phi) \left(\nabla f_k(\phi)\right)^T \right) \psi(t),  \label{Eq2.21}
\end{align}
with $\pi_k(x) > 0$, for $k =1,\,2,\ldots, K$, and ~ $\sum\nolimits_{k=1}^K \pi_k(x) = 1$.

\subsection{Optimization with rare-event modeling} \label{S2.1}
Consider the following random event that we can associate with a certain noise-induced rare-event
\begin{align}
\Phi\bigl(X_T^{\epsilon}\bigr) \le \zeta, \quad \text{with} \quad X_T^{\epsilon} \in \Gamma, \label{Eq2.22}
\end{align}
where $\zeta$ is some positive number and $\Phi\colon \mathbb{R}^p \to \mathbb{R}_{+}$ (e.g., see \cite{r10} for a similar discussion). Then, in the low-noise limits as $\epsilon \to 0$, the probability of observing such a random event $\Phi\bigl(X_T^{\epsilon}\bigr) \le \zeta$, with $X_T^{\epsilon} \in \Gamma$ (cf. Assumption~\ref{ASM1}), subject to $X_0^{\epsilon}=x_0 \in D \cup \partial D$, satisfies the following logarithmic order
\begin{align}
\mathbb{P} \bigl( \Phi\bigl(X_T^{\epsilon}\bigr) \le \zeta \bigr) \asymp \exp\biggl ( -\frac{1}{\epsilon} \inf\limits_{\phi \in \mathcal{C}_{\zeta}} S_{0T}[\phi] \biggr),  \label{Eq2.23}
\end{align}
where $\mathcal{C}_{\zeta} = \bigl\{ \phi(t) \in \mathcal{C}_{\Gamma} \bigl([0,\,T],\, \mathbb{R}^p \bigl) \, \big\vert \, \phi(0) = x_0 \in D \cup \partial D ~ \text{and} ~ \Phi(\phi(T)) \le \zeta \bigr\}$. Here, we remark that the requirement $\Phi(\phi(T)) \le \zeta$, with $\phi(T) \in \Gamma$ (or equivalently, the random event $\Phi\bigl(X_T^{\epsilon}\bigr) \le \zeta$, with $X_T^{\epsilon} \in \Gamma$), can be considered as a desirable outcome that can be included as a penalty term in the minimization problem of Equation~\eqref{Eq2.10} by imposing the following regularity condition on $\Phi(\phi)$ (e.g., see \cite{r11} for a similar discussion).
\begin{assumption}\label{ASM2}
$\nabla\Phi(\phi)$ satisfies the Lipschitz condition in $\phi$.
\end{assumption} 

Noting the condition in Assumption~\ref{ASM2} above, if we define the following rate function
\begin{align}
I(\zeta) = \inf_{\phi \in \mathcal{C}_{\Gamma}} S_{0T}[\phi].  \label{Eq2.24}
\end{align}
Then, the corresponding Fenchel-Legendre transform is given by
\begin{align}
I^{\ast}(\eta) = \inf_{\phi \in \mathcal{C_{\zeta}}} \bigl(S_{0T}[\phi] - \eta \Phi(\zeta) \bigr).  \label{Eq2.25}
\end{align}
Note that, in terms of the variational argument, the corresponding Hamiltonian equations will become as follows
\begin{align}
\text{\em Forward Equation:} ~~ \dot{\phi}(t) = \sum\nolimits_{k=1}^K \pi_k(\phi) f_k(\phi) + \psi(t), \quad \phi(0) = x_0 \quad\quad\quad\quad \quad \label{Eq2.26}
\end{align}
and
\begin{align}
\text{\em Backward Equation:} ~~ \dot{\psi}(t) &= -\sum\nolimits_{k=1}^K  \left( f_k(\phi) \left(\nabla \pi_k(\phi)\right)^T +  \pi_k(\phi) \left(\nabla f_k(\phi)\right)^T \right) \psi(t), \notag \\
& \quad \quad\quad\quad\quad\quad\quad\quad\quad\quad ~~~ \psi(T) = -\lambda \nabla \Phi(\phi(T)), \label{Eq2.27}
\end{align}
where the {\it boundary conditions} for the above forward-backward equations are decoupled (cf. Equations~\eqref{Eq2.20} and \eqref{Eq2.21}). 

In the following section, we exploit such decoupled forward-backward boundary conditions in our rare-event algorithm for solving numerically the variational problem formulation (of the form of Equation~\eqref{Eq2.10}) modeling the most likely sample path leading to a certain noise-induced rare-event.

\section{Main results}\label{S3} In this section, we present our main results. Here, we specifically use the mathematical argument from Section~\ref{S2} and we further provide a new perspective on the learning dynamics for a class of learning problems, i.e., a family of empirical risk minimization-based learning problems arising in modeling of high-dimensional nonlinear functions or dynamical systems.

To be more specific, for an instance of {\it divide-and-conquer} paradigm dealing with large datasets, we first generate $K$ subsample datasets of size $m$ (where $m$ is much less than the total dataset points) from a given original dataset $Z^n = \bigl\{ (x_i, y_i)\bigr\}_{i=1}^n$ by bootstrapping with/without replacement or other related resampling-based techniques, i.e.,
\begin{align}
\hat{Z}^{(1)} = \bigl\{ (\hat{x}_i^{(1)}, \hat{y}_i^{(1)})\bigr\}_{i=1}^m,\, \hat{Z}^{(2)} = \bigl\{ (\hat{x}_i^{(2)}, \hat{y}_i^{(2)})\bigr\}_{i=1}^m, \, \ldots , \, \hat{Z}^{(K)} = \bigl\{ (\hat{x}_i^{(K)}, \hat{y}_i^{(K)})\bigr\}_{i=1}^m.  \label{Eq3.1}
\end{align}
Then, the learning problem of point estimations is to search for parameters $\theta \in \Gamma$, from a finite-dimensional parameter space $\mathbb{R}^p$, such that the function $h_{\theta}(x) \in \mathcal{H}$, from a given class of hypothesis function space $\mathcal{H}$, that describes best the original complete dataset. In terms of mathematical optimization construct, searching for an optimal parameter $\theta^{\ast} \in \Gamma$ can be associated with a {\it steady-state solution} to the following averaged gradient dynamical system, whose {\it time-evolution} is guided by the subsampled datasets from Equation~\eqref{Eq3.1}, i.e.,
\begin{align}
 \dot{\theta}(t) = -\sum\nolimits_{k=1}^K \pi_k(\theta) \nabla J_k(\theta,\hat{Z}^{(k)}), \quad \theta(0) = \theta_0, \label{Eq3.2}
\end{align}
where $\pi_k(\theta) > 0$, for all $k \in \{1,\,2,\ldots, K\}$ and $\theta \in \Gamma$, with $\sum\nolimits_{k=1}^K \pi_k(\theta) = 1$,
\begin{align}
 J_k(\theta, \hat{Z}^{(k)}) = \frac{1}{m} \sum\nolimits_{i=1}^m {\ell}\bigl(h_{\theta}(\hat{x}_i^{(k)}), \hat{y}_i^{(k)}\bigr), \quad k=1,2, \ldots, K \label{Eq3.3}
\end{align}
and $\ell$ is a suitable loss function that quantifies the lack-of-fit between the model.

Moreover, based on the insights from Section~\ref{S2}, we can reinterpret the above averaged gradient dynamical system in Equation~\eqref{Eq3.2}, in the low-noise limits as $\epsilon \to 0$, as a family of randomly perturbed dynamical systems coupled with diffusion-transmutation processes, i.e.,
\begin{align}
d \Theta_t^{\epsilon} &= - \nabla J_{ \nu_k^{\epsilon}}(\Theta,\hat{Z}^{(\nu_k^{\epsilon})}) dt + \sqrt{\epsilon} I_p d W_t, \quad \Theta_0^{\epsilon} = \theta_0, \quad \label{Eq3.4}\\ ~\notag\\
 \mathbb{P} \left \{\nu_{t + \Delta}^{\epsilon} = l \biggl \vert \Theta_t^{\epsilon} = \theta, \nu_{t}^{\epsilon} = k \right\} & = \frac{p_{kl}(\theta)}{\epsilon} \Delta + o({\Delta}) \quad \text{as} \quad\Delta \downarrow 0, \label{Eq3.5}\\
& \quad k, l \in \left\{1,\,2,\ldots, K\right\} \quad \text{and} \quad \theta \in \mathbb{R}^p. \notag
\end{align}
Note that, if Propositions~\ref{P1} and \ref{P2} hold true with respect to the above system in Equations~\eqref{Eq3.4} and \eqref{Eq3.5}. Then, the invariant distribution for the stochastic matrix corresponding to Equation~\eqref{Eq3.5} represents the averaging weighting coefficients that are dynamically apportioning to the learning process corresponding to Equation~\eqref{Eq3.2}, where such an interpretation can be can be viewed as a general framework for improving or enhancing the learning dynamics. Moreover, in order to confine the process $\Theta_T^{\epsilon}$ (i.e., the optimal parameter $\theta^{\ast}$) in the compact set $\Gamma$ with high probability (cf. Assumption~\ref{ASM1}), we require some additional conditions (see Assumption~\ref{ASM3} below) with respect to following averaged dynamical system with small random perturbations
\begin{align*}
 d \Theta_t^{\epsilon} = - \sum\nolimits_{k=1}^K \pi_k(\Theta_t^{\epsilon}) \nabla J_k(\Theta_t^{\epsilon},\hat{Z}^{(k)})dt + \sqrt{\epsilon} I_p d W_t, \quad \Theta_0^{\epsilon} = \theta_0.
\end{align*}
\begin{assumption}\label{ASM3}
Cost functional $J_k(\theta,\hat{Z}^{(k)})$, for each $k \in \{1,\,2, \ldots, K\}$, in Equation~\eqref{Eq3.3} satisfies the following conditions:
\begin{enumerate} [(i)] 
\item {\it Coercivity or superlinear growth condition:}
\begin{align*}
 \lim_{\theta \to \infty} \frac{J_k(\theta, \hat{Z}^{(k)})}{\vert \theta \vert} \to +\infty.
\end{align*}
\item {\it Tightness condition:}
\begin{align*}
\int_{\big\{\theta \colon J_k(\theta, \hat{Z}^{(k)}) \ge \alpha \big\}} \exp \big(-\tfrac{1}{\epsilon} J_k(\theta, \hat{Z}^{(k)}) \big) d \theta \le C_{\alpha} \exp\big(-\alpha/ \epsilon\big), 
\end{align*}
where the constant $C_{\alpha}$ depends on $\alpha$, but not on $\epsilon$.
\end{enumerate}
\end{assumption}
Then, using the mathematical argument related to rare-even modeling from Subsection~\ref{S2.1}, we can provide a large deviation result for a certain noise-induced rare-event. In particular, we consider the following event, e.g., a desirable outcome that may represent how near the solution to an optimal parameter value 
\begin{align}
\Phi\bigl(\Theta_T^{\epsilon}\bigr) \le \zeta, \quad \text{with} \quad \Theta_T^{\epsilon} \in \Gamma. \label{Eq3.6}
\end{align}
Note that, in the low-noise limits as $\epsilon \to 0$, the probability of observing such a random event $\Phi\bigl(\Theta_T^{\epsilon}\bigr) \le \zeta$, with $\Theta_T^{\epsilon} \in \Gamma$, subject to $\Theta_0^{\epsilon}=\theta_0$, satisfies the following
\begin{align}
\limsup_{\epsilon \to 0} -\epsilon \log \mathbb{P} \bigl( \Phi\bigl(\Theta_T^{\epsilon}\bigr) \le \zeta \bigr) = \inf\limits_{\phi \in \mathcal{C}_{\zeta}} S_{0T}[\phi],  \label{Eq3.7}
\end{align}
where $\phi(t) \in \mathcal{C}_{\zeta} = \bigl\{ \phi(t) \in \mathcal{C}_{\Gamma} \bigl([0,\,T],\, \mathbb{R}^p \bigl) \, \big\vert \, \phi(0) = \theta_0, ~ \Phi(\phi(T)) \le \zeta \bigr\}$ and the {\it action functional} $S_{0T}[\phi]$ is given by
\begin{equation}
S_{0T}[\phi] = \left\{ \begin{matrix} 
\int_0^T L(\phi,\dot{\phi}) dt, \quad \text {if the integral converges} \\~\\
+\infty, \quad\quad\quad\quad\quad\quad\quad \text {otherwise} 
\end{matrix}\right.  \label{Eq3.8}
\end{equation}  
while the corresponding {\it Lagrangian function} $L(\phi,\dot{\phi})$ is given by
\begin{align}
L(\phi,\dot{\phi}) = \frac{1}{2}\left \Vert \dot{\phi}(t) + \sum\nolimits_{k=1}^K \pi_k(\phi) \nabla J_k(\phi,\hat{Z}^{(k)}) \right \Vert^2 \label{Eq3.9}
\end{align}
Moreover, if we consider the condition $\Phi(\phi(T)) \le \zeta$, with $\phi(T) \in \Gamma$, as a penalty term in the corresponding variational problem (cf. Equation~\eqref{Eq2.10}). Then, we will have the following Hamiltonian equations, with decoupled boundary conditions,
\begin{align}
 \dot{\phi}(t) = -\sum\nolimits_{k=1}^K \pi_k(\phi) \nabla J_k(\phi,\hat{Z}^{(k)}) + \psi(t), \quad \phi(0) = \theta_0 \quad \label{Eq3.10}
\end{align}
and
\begin{align}
\dot{\psi}(t) = \sum\nolimits_{k=1}^K  \biggl( \nabla J_k(\phi,\hat{Z}^{(k)}) \left(\nabla \pi_k(\phi)\right)^T + \pi_k(\phi) \left(\operatorname{Jac}_{\nabla J}(\phi,\hat{Z}^{(k)}) \right)^T \biggr) \psi(t), \notag \\
 \quad \quad \quad  \psi(T) = -\lambda \nabla \Phi(\phi(T)), \label{Eq3.11}
\end{align}
where $\operatorname{Jac}_{\nabla J}(\phi,\hat{Z}^{(k)})$ is the Jacobian of $\nabla J_k(\phi,\hat{Z}^{(k)})$, with $\pi_k(\phi) > 0$, for $k =1,\,2,\ldots, K$, and $\sum\nolimits_{k=1}^K \pi_k(\phi) = 1$. Note that, if the stochastic matrix in Equation~\eqref{Eq3.5} does not depend on $\phi$, but irreducible and aperiodic. Then, the corresponding invariant distribution $\pi=(\pi_1, \pi_2, \dots, \pi_K)$ is strictly positive and constant, that corresponds to the left eigenvector for the stochastic matrix. Moreover, the rare-event algorithm, i.e., a numerical scheme, consists of the following steps (including resampling technique from the original dataset) for finding the most likely sample path.

{\rm \footnotesize

\begin{framed}

{\bf ALGORITHM:} Rare-Event Algorithm
\begin{itemize}
\item[{\bf Input:}] The original dataset $Z^n = \bigl\{ (x_i, y_i)\bigr\}_{i=1}^n$; $K$ number of subsampled datasets; $m$ subsample data size. Then, using bootstrapping technique, with replacement, generate $K$ subsample datasets:
\begin{align*}
\hat{Z}^{(1)} = \bigl\{ (\hat{x}_i^{(1)}, \hat{y}_i^{(1)})\bigr\}_{i=1}^m,\, \hat{Z}^{(2)} = \bigl\{ (\hat{x}_i^{(2)}, \hat{y}_i^{(2)})\bigr\}_{i=1}^m, \, \ldots , \, \hat{Z}^{(K)} = \bigl\{ (\hat{x}_i^{(K)}, \hat{y}_i^{(K)})\bigr\}_{i=1}^m
\end{align*}
\item[{\bf 0.}] Start with the ${\rm\,j^{th}}$ guess $\phi^j(t) \in \mathcal{C}_{\zeta}$ for the most likely trajectory path.\footnote{Notice that the algorithm works on the space $\mathcal{C}_{\zeta}$ which is more convenient, rather than on $\mathcal{C}_{\Gamma}$.}
\item[{\bf 1.}] Solve the equation
\begin{align*}
\dot{\psi}(t) = \sum\nolimits_{k=1}^K \pi_k \left(\operatorname{Jac}_{\nabla J}(\phi,\hat{Z}^{(k)}) \right)^T \psi(t), \quad \psi(T) = -\lambda \nabla \Phi(\phi^j(T))
 \end{align*}
 backward in time to obtain $\psi^{j}(t)$.
 \item[{\bf 2.}] Solve the equation
\begin{align*}
\dot{\phi}(t) = - \sum\nolimits_{k=1}^K \pi_k \nabla J_k(\phi,\hat{Z}^{(k)}) + \psi^j(t), \quad \phi(0) = \theta_0
\end{align*}
 forward in time to obtain the next guess  $\phi^{j+1}(t)$.
 \item[{\bf 3.}] Iterate until convergence, i.e., $\Vert \phi^{j+1}(t) - \phi^{j}(t)\Vert \le {\rm {\rm tol}}$.
 \item[{\bf Output:}] An optimal parameter value $\theta^{\ast} = \phi^{j}(T)$ as $j \to \infty$.
\end{itemize}
\end{framed}}

\begin{remark}
Note that, in Step 2 above, the functional $\psi^j(t)$ enters as an additional term to the system equation that nudges the dynamics toward the rare-event by reweighing the sample paths which is similar to that of important sampling concept (e.g., see \cite{r11} for additional discussions).
\end{remark}

\section{Numerical simulation results}\label{S4} In this section, we presented numerical results for a simple polynomial interpolation problem modeling the thermophysical properties of saturated water (in liquid state), where the data is taken from (see \cite[p.~1003]{r12}). Here, the mathematical model, which relates the specific heat $c_{p}$ with the temperature $T$ (in Kelvin ${\rm [K]}$, in the range $273.15 \,{\rm K} \le T \le 373.15\, {\rm K}$, is assumed to obey a second-order polynomial function of the form
\begin{align*}
h_{\theta} (T) = \alpha_0 + \alpha_1 T + \alpha_2 T^2, 
\end{align*}
where $\theta = (\alpha_0, \alpha_1, \alpha_2)$ is the model parameter. Then, we presented results for the following two cases:

{\bf Part (a).} For the first case result, we generated $K=6$ subsampled datasets of size $m = 18$ from the original data points of $22$ by bootstrapping with replacement. Here, we also sampled additional dataset of size $9$, denoted by $\hat{Z}^{Test}$, that will be used for model validation. Our interest is to characterize the learning dynamics, based on the asymptotic behavior of the averaged gradient dynamical system with small noises, which is interpreted as random walks on a graph, with six vertices identified as ${\rm S1}$, ${\rm S2}$, ${\rm S3}$, ${\rm S4}$, ${\rm S5}$ and ${\rm S6}$, whose outgoing edges are uniformly chosen at random. Figure~\ref{FG1} shows the random walk on this graph that begins at some vertex, and moves to another vertex involving a process of moving clockwise, counterclockwise, or staying in place, at each time step, with probabilities of $1/3$, i.e.,
\begin{equation}
p_{kl} = \left\{ \begin{matrix} 
1/3, \quad \text {if} \quad l = (k -1) \operatorname{mod} 6 \vspace{1mm}\\ 
1/3, \quad \text {if} \quad l = k \quad \quad\quad\quad \quad~  \vspace{1mm}\\
1/3, \quad \text {if} \quad l = (k +1) \operatorname{mod} 6\,
\end{matrix}\right. \notag
\end{equation}  
Moreover, the corresponding invariant distribution will approach a uniform distribution $\pi = (1/6,\, 1/6,\,1/6,\,1/6,\,1/6,\,1/6)$, after making transitions of $\epsilon^{-1} \ge 10,000$ (where the parameter $\epsilon$ is also related to perturbed noise term). As a result, the learning dynamics can be represented by a randomly perturbed averaged system, i.e., $d \Theta_t^{\epsilon} = -(1/6) \sum\nolimits_{k=1}^6 \nabla J_k(\Theta_t^{\epsilon},\hat{Z}^{(k)})dt + \sqrt{\epsilon} I_p d W_t$.
\begin{figure}[h]
\begin{center}
 \includegraphics[scale=0.325]{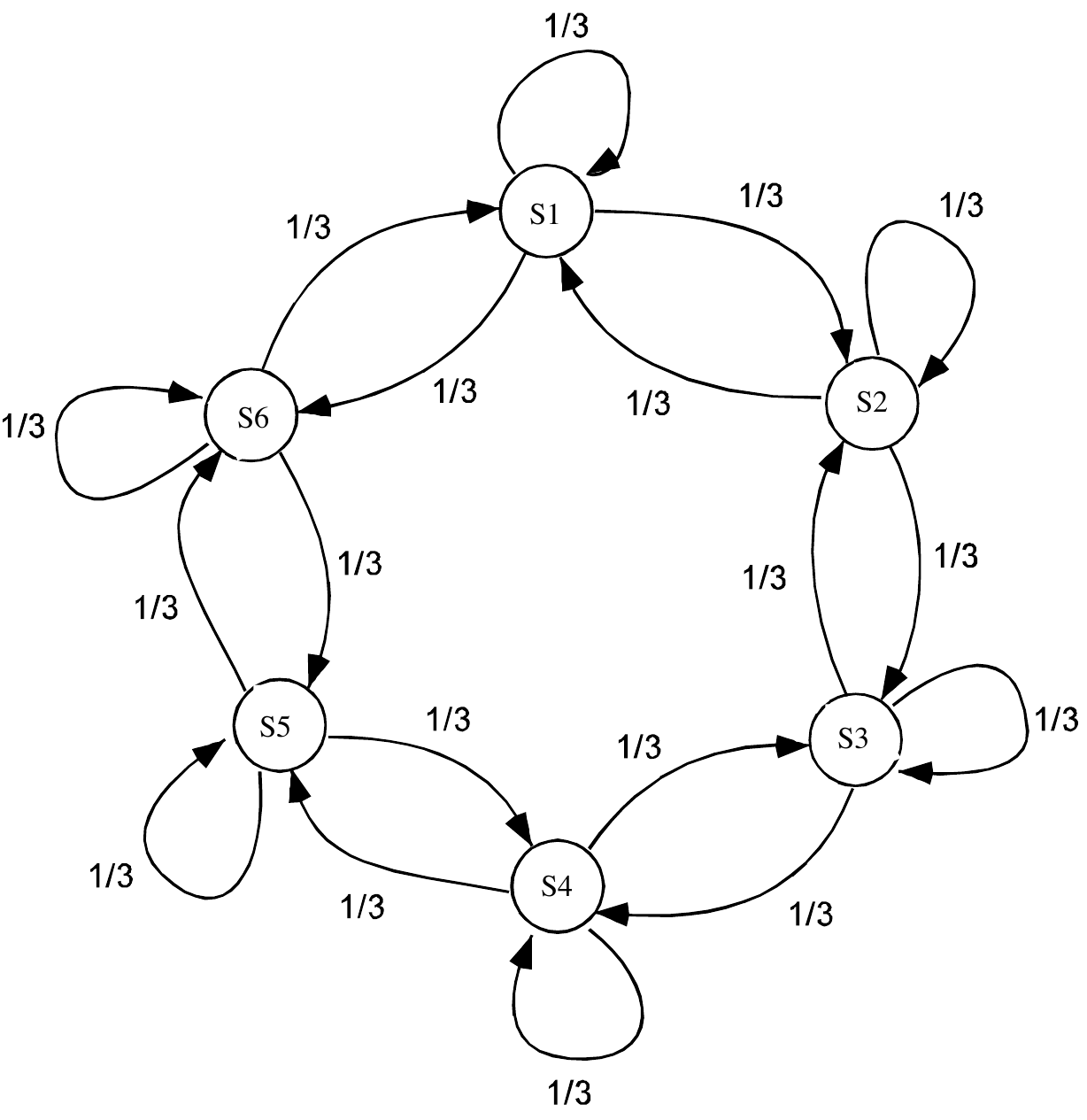}\vspace{-1mm}
  \caption{ A random walk on a graph with six vertices associated with learning dynamics.} \label{FG1}
\end{center}
\end{figure} \vspace{-4mm}
\begin{figure}[bh]
\begin{center}
 \includegraphics[scale=0.215]{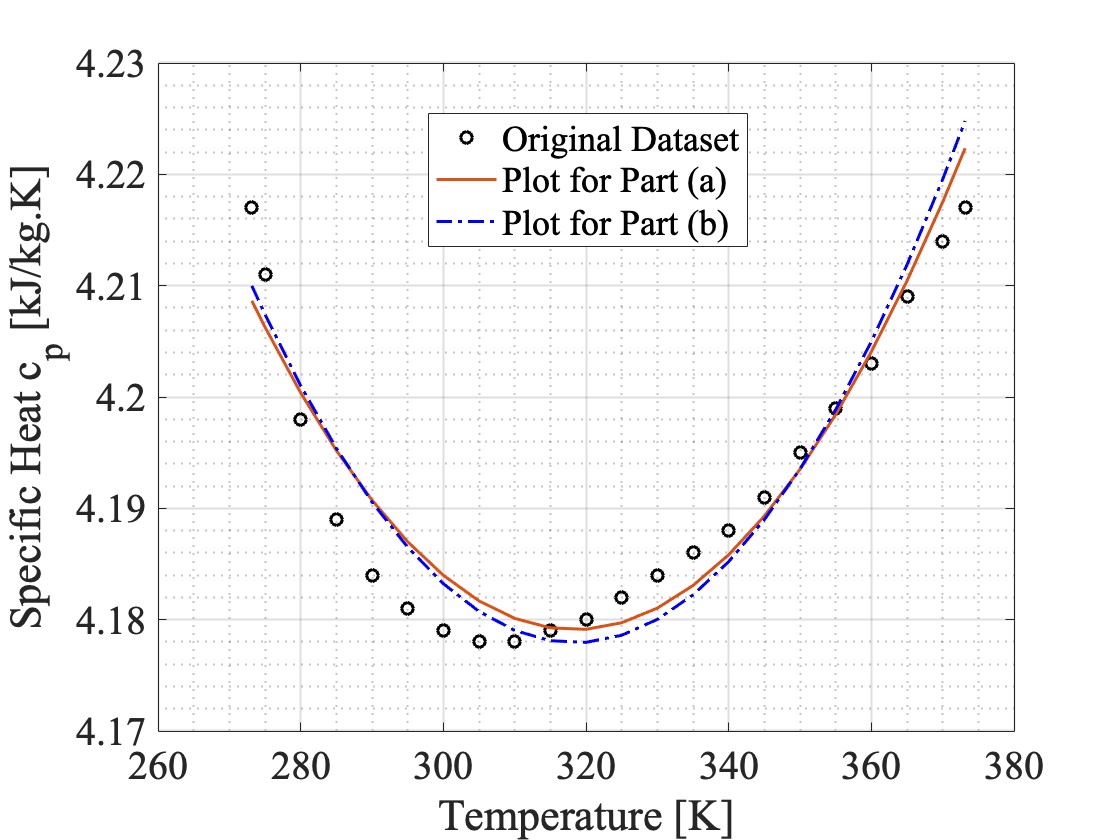}\vspace{-1mm}
  \caption{Plots for the original dataset for the specific heat $c_{\rho}$ and the learned model $h_{\theta^{\ast}}$.} \label{FG2}
\end{center}
\end{figure}

{\bf Part (b).} For the second part, we generated $K=3$ subsampled datasets of size $m = 7$ from the original dataset, by bootstrapping without replacement. Here, the stochastic matrix considered, which is associated with the learning dynamics, is given by
\begin{equation}
P = \left(\begin{matrix} 
0 & 1 & 0 \\
1/3 \ & 0 & 2/3\\
1/3 & 1/3 & 1/3
\end{matrix}\right) \notag
\end{equation} 
Moreover, the corresponding invariant distribution will approach $\pi = (0.25,\, 0.375,\,0.375)$, after making transitions of $\epsilon^{-1} \ge 20$, while the randomly perturbed averaged system can be represented by $d \Theta_t^{\epsilon} = -\sum\nolimits_{k=1}^3 \pi_k \nabla J_k(\Theta_t^{\epsilon},\hat{Z}^{(k)})dt + \sqrt{\epsilon} I_p d W_t$. Note that, for Part (a), with random walks on the graph, we considered an event $\Phi\bigl(\Theta_T^{\epsilon},\hat{Z}^{Test}\bigr) \le \zeta = 1 \times 10^{-4}$ for the rare-event simulation (e.g., see \cite{r11} for related discussions) that signifies model validation at a level of order $1 \times 10^{-4}$, i.e., an event that characterizes how near the solution to an optimal parameters, with steady-state solution of $\Theta_T^{\epsilon} \to \theta^{\ast} = (5.5430,-8.5994 \times 10^{-3},1.3549 \times 10^{-5})$, while for Part (b), a direct computation of the steady-state solution leads to $\Theta_T^{\epsilon} \to \theta^{\ast} = (5.6761,-9.4045 \times 10^{-3},1.4765 \times 10^{-5})$. Figure~\ref{FG2} shows plots for the original dataset for the specific heat $c_p$ and the learned models with the rare-event simulation.

\section{Concluding Remarks}\label{S5}
In this paper, we presented a mathematical argument, based on averaged gradient systems coupled with diffusion-transmutation processes, that provides a new perspective on the learning dynamics for a class of empirical risk minimization-based learning problems. In particular, the framework can be viewed as a general extension of improving the learning dynamics, by apportioning dynamically the averaging weighting coefficients, while the averaged gradient systems are guided by subsampled datasets obtained from the original dataset as an instance of divide-and-conquer paradigm. Here, we also provided a large deviation result, i.e., a variational problem formulation, modeling and a computational algorithm for solving the most likely sample path leading to optimal solutions. Finally, as part of this work, we  presented some numerical results for a typical case of nonlinear regression problem.

\end{document}